\theoremstyle{plain}
\newtheorem{thm}{Theorem}[section]
\newtheorem{prop}[thm]{Proposition}
\theoremstyle{definition}
\newtheorem{ex}[thm]{Example}
\newtheorem{rem}[thm]{Remark}
\theoremstyle{definition}
\newtheorem{defi}[thm]{Definition}
\DeclareMathOperator{\Z}{\mathbb{Z}}
\DeclareMathOperator{\sgn}{sgn}
\DeclareMathOperator{\wt}{wt}
\DeclareMathOperator{\opp}{opp}
\DeclareMathOperator{\ii}{\mathbf{i}}
\DeclareMathOperator{\jj}{\mathbf{j}}
\DeclareMathOperator{\W}{\mathcal{W}(w_0)}
\newcommand\restr[2]{{
  \left.\kern-\nulldelimiterspace 
  #1 
  \vphantom{\big|} 
  \right|_{#2} 
  }}
\renewcommand*\env@cases[1][1.2]{%
  \let\@ifnextchar\new@ifnextchar
  \left\lbrace
  \def\arraystretch{#1}%
  \array{@{}l@{\quad}l@{}}%
}
\begin{document}

\title[Interplay of Lusztig and string data]{On the interplay of the parametrizations of canonical bases by Lusztig and string data}

\author{Volker Genz}
\address{Department of Mathematics, Ruhr-University Bochum}
\email{volker.genz@gmail.com}

\author{Gleb Koshevoy}
\address{IITP RAS, Moscow}
\email{koshevoyga@gmail.com}

\author{Bea Schumann}
\address{Mathematical Institute, University of Cologne}
\email{bschumann@math.uni-koeln.de}

\begin{abstract} For arbitrary reduced words we give formulas for the crystal structures on string and Lusztig data of type $A$ as well as the defining inequalities of the corresponding polytopes revealing certain dualities between them.
\end{abstract}

\maketitle

\section{Introduction}

The parametrizations of Lusztig's canonical bases of irreducible finite dimensional representations of a simple complex Lie algebra $\mathfrak{g}$ by Lusztig- and string data, respectively, are intensely studied objects with numerous applications. They have proven useful in the construction of canonical bases \cite{L93,K95} toric degeneration of flag varieties \cite{FFL, FN}, geometric crystals \cite{BK07} and the study of the total positive part of the corresponding algebraic group leading to the notion of a cluster algebra \cite{BFZ}. 

The parametrizations come in families indexed by reduced words $\ii$ of the longest Weyl group element of $\mathfrak{g}$. Each parametrization is given by integer points of a convex polytope which is cut out of a polyhedral convex cone whose integer points parametrize Lusztig's canonical basis of the negative part $U_q^-$ of the quantized enveloping algebra $U_q(\mathfrak{g})$. 

By construction, the integer points of these cones carry a crystal structure isomorphic to $B(\infty)$, the crystal base of $U_q^-$. For a reduced word $\ii$ we denote the crystal given by Lusztig data by $\mathcal{L}_{\ii}(\infty)$ and the crystal given by string data by $\mathcal{S}_{\ii}(\infty)$. We denote the corresponding $*$ crystal structure by $\mathcal{L}_{\ii}(\infty)^*$ and $\mathcal{S}_{\ii}(\infty)^*$, respectively. Each of these crystals leads to the description of a crystal isomorphic to $B(\lambda)$, the crystal base of the irreducible representation $V(\lambda)$, with underlying sets ($I$ the index set of fundamental weights and $\lambda_a=\left<\lambda,\omega_a\right>$) 
\begin{align*}
\displaystyle{\mathcal{S}_{\ii}(\lambda)}&=\displaystyle{\{ x \in \mathcal{S}_{\ii}(\infty) \mid \varepsilon_a^*(x) \le \lambda_a \ \forall a\in I\}}, 
\,\displaystyle{\mathcal{S}_{\ii}(\lambda)^*}=\displaystyle{\{ x \in \mathcal{S}_{\ii}(\infty) \mid \varepsilon_a(x) \le \lambda_a \ \forall a\in I\}}, \\
\displaystyle{\mathcal{L}_{\ii}(\lambda)}&=\displaystyle{\{ x \in \mathcal{L}_{\ii}(\infty) \mid \varepsilon_a^*(x) \le \lambda_a \ \forall a\in I\}}, 
\,\displaystyle{\mathcal{L}_{\ii}(\lambda)^*}=\displaystyle{\{ x \in \mathcal{L}_{\ii}(\infty) \mid \varepsilon_a(x) \le \lambda_a \ \forall a\in I\}}.
\end{align*}
We refer to Section \ref{crysdef} for the definitions of $\varepsilon_a$ and $\varepsilon^*_a$. 
Each of the spaces $\mathcal{S}_{\ii}(\lambda)^*$, $\mathcal{S}_{\ii}(\lambda)$, $\mathcal{L}_{\ii}(\lambda)$ and $\mathcal{L}_{\ii}(\lambda)^*$ is given by the integer solution of a system of linear inequalities. The polytopes given by the real valued solutions of the system of linear inequalities defining $\mathcal{S}_{\ii}(\lambda)^*$ and $\mathcal{S}_{\ii}(\lambda)$ are referred to as the (Littelmann--Berenstein-Zelevinsky) string polytope and the Nakashima-Zelevinsky (string) polytope, respectively. 

The crystal structures allow to read off characters, tensor product multiplicities and branching multiplicities for the restriction to Levi subalgebras. Moreover, each crystal comes with natural operators by which one can successively generate all integer points of the polytopes starting from the integer point corresponding to the highest weight vector.

In this work we give an explicit description of each of the crystals $\mathcal{S}_{\ii}(\lambda)$, $\mathcal{S}_{\ii}(\lambda)^*$, $\mathcal{L}_{\ii}(\lambda)$ and $\mathcal{L}_{\ii}(\lambda)^*$ for $\mathfrak{g}=\text{sl}_n(\mathbb{C})$ consisting of

\begin{enumerate}
\item an order lattice $(\mathbb{L}_a, \preceq)$ of sequences of positive roots for any $a\in [n-1]$, 
\item two maps associating vectors $r(\ell), s(\ell)$ to any $\ell \in \mathbb{L}_a$.
\end{enumerate}

We define three such order lattices $(\Gamma_a,\preceq), (\Gamma^{*}_a,\preceq), (\Upsilon_a,\preceq)$ with corresponding maps in Section \ref{orderlattices}. The crystal structures are then given in Theorem \ref{crossingformulas}.

Using the same ingredients we derive in Section \ref{conesandpol} inequalities of the corresponding cones and polytopes.

Note that the defining inequalities of the polytopes naturally divide in two subclasses: the ones defining the overlying cone and the ones cutting out the polytope from the cone. We call the former the cone inequalities and the latter the highest weight inequalities.

The following table summarizes the role the order lattices $(\Gamma_a,\preceq)$, $(\Gamma^{*}_a,\preceq)$, $(\Upsilon_a,\preceq)$ play in the description of crystal structures and defining inequalities.
\begin{center}
 \resizebox{\textwidth}{!}{
\begin{tabular}{ c|c|c|c }

Polytopes $\backslash$ lattices & $\Gamma_a$ & $\Gamma^{*}_a$ & $\Upsilon_a$ \\ \hline
$\mathcal{L}_{\ii}(\lambda)$ & crystal description & highest weight inequalities & cone inequalities \\ \hline
$\mathcal{L}_{\ii}(\lambda)^*$ & highest weight inequalities & crystal description & cone inequalities \\ \hline
$\mathcal{S}_{\ii}(\lambda)$ & highest weight inequalities & cone inequalities & crystal description \\ \hline
$\mathcal{S}_{\ii}(\lambda)^*$ & crystal description & cone inequalities & highest weight inequalities
\end{tabular}}
\end{center}

\section*{Acknowledgement}
B. Schumann would like to thank Peter Littelmann for inspiring conversations and constant encouragement. She is further grateful for interesting discussions with Shmuel Zelikson. V. Genz and B. Schumann were (partially) supported by the SFB/TRR 191. G. Koshevoy was supported by the grant RSF 16-11- 10075.

\section{The parametrizations and their crystals structures}

\subsection{Notation}
Let $\mathbb{N}=\{0,1,2,\ldots\}$, $\mathfrak{g}=\text{sl}_n(\mathbb{C})$ and $\mathfrak{h}\subset \mathfrak{g}$ its Cartan subalgebra consisting of the diagonal matrices. We set
$[m]:=\{1,2,\ldots, m\}$
 and define for $k\in [n]$ the function $\epsilon_k\in \mathfrak{h}^*$ by $\epsilon_k(\text{diag}(h_1,h_2,\ldots,h_n))=h_k$. The set $\Phi^+$ of positive roots of $\mathfrak{g}$ is given by $\Phi^+=\{\epsilon_k-\epsilon_{\ell} \mid 1\leq k<\ell \leq n\}$. We denote by $N:=\frac{n(n-1)}{2}$ the cardinality of $\Phi^+$ and write $\alpha_{k,\ell}=\epsilon_k-\epsilon_{\ell}$. 
We identify the set $\Phi^+$ with the set of pairs $(i,j)\in [n]^2$, $i<j$ by
 \begin{equation}\label{eq:rootident}
 \alpha_{k, \ell} \mapsto (k,\ell).
 \end{equation}

The fundamental weights of $\mathfrak{g}$ are given as $\omega_a=\sum_{s\in [a]}\epsilon_s$ for $a\in [n-1]$. Let $P\subset \mathfrak{h}^*$ (resp. $P^+\subset \mathfrak{h}^*$) the $\mathbb{Z}$-span (resp. $\mathbb{Z}_{\ge 0}$-span) of the set of fundamental weights $\{\omega_a\}_{a\in [n-1]}$ of $\text{sl}_n(\mathbb{C})$. The lattice $P$ is called the weight lattice of $\mathfrak{g}$ and $P^+$ the set of dominant integral weights. For $\lambda \in P^+$ we denote by $V(\lambda)$ the irreducible $\mathfrak{g}$-representation of highest weight $\lambda$ and write $\lambda=\sum_{a\in [n-1]}\lambda_a\omega_a$.

\subsection{Background on crystals}\label{crysdef}
We refer to \cite{K95} for an introduction to crystals and only give the necessary ingredients for this work.

A crystal consists of a set $B$ together with maps $\wt: B \rightarrow P$, ${e}_a:B \rightarrow B \sqcup \{0\}$ and ${f}_a:B  \rightarrow B \sqcup \{0\}$ for $a\in [n-1]$ satisfying $b'={e}_a b \iff {f}_a b'=b$ for $b,b' \in B$.

We denote by $B(\infty)$ the crystal base of negative part of the quantized enveloping algebra $U^-_q(g)$ of $\mathfrak{g}$ (see Section 8 of \cite{K95}) and by $B(\lambda)$ the crystal base of $V(\lambda)$ (see Section 4 of \cite{K95}). On these two crystals we have a function $\varepsilon_a(b)=\max\{m\in \mathbb{N} \mid {e}_a^{m}b \ne 0\}$ for any $a\in [n-1]$. On $B(\lambda)$ we further have a function $\varphi_a(b)=\max\{m\in \mathbb{N} \mid {f}_a^{m}b \ne 0\}=\varepsilon_a(b)+\left<\wt(b),\alpha_{a,a+1}\right>$.

Kashiwara \cite[Section 8.3]{K95} introduced a $\wt$-preserving involution on $B(\infty)$ and we denote by $f_a^*(x)=(f_ax^*)^*, e_a^*(x)=(e_ax^*)^*$ and $\varepsilon_a^*(x)=\varepsilon_a(x^*)$ the twisted maps. Further $B(\infty)^*$ is the crystal given by the set $B(\infty)$ and the twisted maps.

By \cite[Section 8.3]{K95} the crystal $B(\lambda)$ can be identified with the set
\begin{equation}\label{lambdaininfty}
B(\lambda)=\{b\in B(\infty) \mid \varepsilon^*_a(b) \le \lambda_a \ \forall a \in [n-1]\}
\end{equation}
with maps ${e}_a$, $\varepsilon_a$ induced from those on $B(\infty)$, $\wt$ the induced map $\wt$ on $B(\infty)$ shifted by $\lambda$ and ${f}_a b$ the induced map from $B(\infty)$ if $\varphi(b)>0$ and otherwise ${f}_a b=0$.

Exchanging the roles of $B(\infty)$ and $B(\infty)^*$, we further identify $B(\lambda)$ with 
\begin{equation}\label{lambdaininftystar}
B(\lambda)=\{b\in B(\infty)^* \mid \varepsilon_a(b) \le \lambda_a \ \forall a \in [n-1]\}
\end{equation}
and with maps as above twisted by $*$.

\subsection{Symmetric groups and reduced words}
Let $\mathfrak{S}_n$ be the symmetric group in $n$ letters. The group $\mathfrak{S}_n$ is generated by the simple transpositions $\sigma_a$, $a \in [n-1]$, interchanging $a$ and $a+1$.

A reduced expression of $w\in \mathfrak{S}_n$ is a decomposition of $w=\sigma_{i_1}\sigma_{i_2}\cdots \sigma_{i_k}$
into a product of simple transposition with a minimal number of factors. We call $k$ the \emph{length $\ell(w)$ of $w$}. The group $\mathfrak{S}_n$ has a unique longest element $w_0$ of length $N:=\frac{n(n-1)}{2}$.

For a reduced expression of $w_0$ we write $\ii:=(i_1,i_2,\ldots, i_N)$ and call $\ii$ a \emph{reduced word} (for $w_0$). The set of reduced words for $w_0$ is denoted by $\W$. Every $\ii=(i_1,i_2,\ldots, i_N) \in \W$ induces a total ordering $\le_{\ii}$ on $\Phi^+$ given by
\begin{equation}\label{eq:rootorder}
(i_1, i_1+1) \le_{\ii} \sigma_{i_1} (i_2, i_2+1) \le_{\ii} \ldots \le_{\ii} \sigma_{i_1}\cdots\sigma_{i_{N-1}}(i_N,i_N+1),
\end{equation}
where we used the identification \eqref{eq:rootident}.

\subsection{Crystal structures on string data}\label{stringcrys}
Let $\ii \in \mathcal{W}(w_0)$.  Using the definition of $\varepsilon^*_a$ and ${e}^*_a$ from Section \ref{crysdef} we associate to $b\in B(\infty)$ a vector $\text{str}_{\ii}(b)\in \mathbb{N}^N$ as follows.
Let $x_k=\varepsilon^*_{i_k}(({{e}^{*}}_{i_{k-1}})^{x_{k-1}}\cdots ({{e}^*}_{i_1})^{x_1}b)$. 
We define $\text{str}_{\ii}(b):=(x_1,\ldots,x_N)$ and call $(x_1,\ldots,x_N)$ the string datum of $b$ in direction $\ii$.

By \cite{BZ, Lit} the cone $\mathcal{S}_{\ii}\subset \mathbb{R}^N$ spanned by the set $\{\text{str}_{\ii}(b) \mid b\in B(\infty)\}$ is a rational polyhedral cone called the \emph{string cone} associated to $\ii$.

\begin{rem} We remark that in \cite{BZ, Lit, GKS1} the string cone $\mathcal{S}_{\ii}\subset \mathbb{R}^N$ is defined as the conic hull of the points $\{\text{str}_{\ii}(b^*) \mid b\in B(\infty)\}$.
Since $B(\infty)=B(\infty)^*$, both descriptions of the string cone coincide.
\end{rem}

Identifying the string cone $\mathcal{S}_{\ii}$ with the image of $B(\infty)$ using Kashiwara's embedding theorem \cite[Theorem 8.2]{K95} we obtain a crystal structure on $\mathcal{S}_{\ii}$ isomorphic to $B(\infty)$.
We refer to Section 2.4 of \cite{NZ} and \cite{GKS3} for a detailed discussion.

We recall the crystal structure on $\mathcal{S}_{\ii}$.

Let $(c_{i,j})$ denote the Cartan matrix of $\text{sl}_n(\mathbb{C})$. For $x\in\mathcal{S}_{\ii}$, $a\in[n-1]$ and $k\in[N]$ 
\begin{align*}\begin{split}
\eta_k(x)&:=x_j+ \sum_{ j <k\le N}c_{i_j,i_k}x_k,\\
\varepsilon_a(x)&= \max\left\{\eta_k(x) \mid k\in[N],\, i_k=a \right\},\qquad \wt(x)=-\displaystyle\sum _{k=1}^{N} x_k \alpha_{i_k},\\
{f}_a (x) & = x+\left(\delta_{k,\ell^{x}}\right)_{k\in[N]}\\ {e}_a (x) &=\begin{cases}x-\left(\delta_{k, \ell_{x}}\right)_{k\in[N]} & \text{if }\varepsilon_a(x)>0, \\ 0, &\text{else,}\end{cases}\end{split}
\end{align*}
where $\ell^{x}\in[N]$ is minimal with $i_{\ell^x}=a$ and $\eta_{\ell^x}(x)=\varepsilon_a(x)$ and where
$\ell_{x}\in[N]$ is maximal with $i_{\ell_x}=a$ and $\eta_{\ell_x}(x)=\varepsilon_a(x)$. We denote the set $\mathcal{S}_{\ii}$ equipped with this crystal structure by $\mathcal{S}_{\ii}(\infty)$.

The $*$-crystal structure on $\mathcal{S}_{\ii}$ has the following description. By \cite{BZ, Lit} there exist piecewise linear bijections $\Psi^{\ii}_{\jj} : \mathcal{S}_{\ii} \rightarrow \mathcal{S}_{\jj}$ with $\text{str}_{\jj}=\Psi^{\ii}_{\jj} \circ \text{str}_{\ii}.$

Let $x \in \mathcal{S}_{\ii},$ $a\in[n-1]$ and $\jj\in\W$ with $j_1=a$. Setting $y:=\Psi^{\ii}_{\jj}(x)\in\mathcal{S}_{\jj}$ we have
\begin{align*}
\varepsilon_{a}^*(x)&=y_1,\qquad \wt(x)=-\displaystyle\sum _{k=1}^{N} x_k \alpha_{i_k},\\\notag
f_{a}^*(x) &= \Psi^{\jj}_{\ii}\left(y+(1,0,0,\dots)\right),\\\notag
e_{a}^*(x) &= \begin{cases}
\Psi^{\jj}_{\ii}\left(y-(1,0,0,\dots) \right), & \text{if $\varepsilon_a^*(x)>0,$}\\0,&\text{else.}
\end{cases}
\end{align*}	
We denote the set $\mathcal{S}_{\ii}$ equipped with this crystal structure by $\mathcal{S}_{\ii}(\infty)^*$.

By \eqref{lambdaininfty}, \eqref{lambdaininftystar} we get a crystal structures isomorphic to $B(\lambda)$ on the sets 
\begin{align*}
\mathcal{S}_{\ii}(\lambda)^*&=\{x \in \mathcal{S}_{\ii} \mid \varepsilon_a(x) \le \lambda_a \ \forall a\in [n-1]\}\subset \mathbb{N}^N,\\
\mathcal{S}_{\ii}(\lambda)&=\{x \in \mathcal{S}_{\ii} \mid \varepsilon^*_a(x) \le \lambda_a \ \forall a\in [n-1]\}\subset \mathbb{N}^N,
\end{align*}	
induced from $\mathcal{S}_{\ii}(\infty)^*$ and $\mathcal{S}_{\ii}(\infty)$, respectively.

The sets $\mathcal{S}_{\ii}(\lambda)^*$ and $\mathcal{S}_{\ii}(\lambda)$ are given by the integer solution of a system of linear inequalities. The polytope given by the real valued solutions of the system of linear inequalities defining $\mathcal{S}_{\ii}(\lambda)^*$ is referred to as the (Littelmann--Berenstein-Zelevinsky) string polytope (see \cite{BZ,Lit,FN}). The polytope given by the real valued solutions of the system of linear inequalities defining $\mathcal{S}_{\ii}(\lambda)$ is referred to as the Nakashima-Zelevinsky (string) polytope (see \cite{FN, NZ, N99})

\subsection{Crystal structures on Lusztig data}\label{sec:crystal}
By identifying $x\in \mathbb{N}^N$ with the divided powers of the root vectors in a monomial of a PBW-type $B_{\ii}$ basis associated to $\ii=(i_1,i_2,\ldots,i_N)\in \mathcal{W}(w_0)$ we obtain a weight preserving bijection of $\mathbb{N}^N$ and Lusztig's canonical basis \cite{L93}. We call $x\in \mathbb{N}^N$ the $\ii$-Lusztig datum of the corresponding canonical basis element.

Let $\ii$ and $\jj$ be two reduced words for $w_0$. In \cite{L93} a piecewise-linear bijection $\Phi_{\jj}^{\ii}:\mathbb{N}^{N}\rightarrow \mathbb{N}^{N}$ from the set of $\ii$-Lusztig data to the set of $\jj$-Lusztig data is given. The piecewise-linear bijections $\Phi_{\jj}^{\ii}$ are used to equip the set $\mathbb{N}^{N}$ with crystal structures isomorphic to $B(\infty)$ and $B(\infty)^*$ (\cite{L93}, see also \cite{GKS1}) as follows.

Let $x\in\mathbb{N}^N$ be an $\ii$-Lusztig datum and $a\in[n]$. We set $\wt(x)=\sum_{k\in {N}}x_k\beta_k$ where $\Phi^+_{\ii}=(\beta_1,\beta_2,\ldots,\beta_N)$ is the sequence of positive roots ordered with respect to $\le_{\ii}$. Let $\jj\in\W$ with $j_1=a$ and $y:=\Phi^{\ii}_{\jj}(x)$
\begin{align*}
\varepsilon_{a}(x)&=y_1,\\
f_{a}(x) &= \Phi^{\jj}_{\ii}\left(y+(1,0,0,\dots)\right),\\
e_{a}(x) &= \begin{cases}
\Phi^{\jj}_{\ii}\left(y-(1,0,0,\dots) \right), & \text{if $\varepsilon_a(x)>0,$}\\0,&\text{else.}
\end{cases}
\end{align*}
Let $\jj\in\W$ with $j_N=n-a$ and $y:=\Phi^{\ii}_{\jj}(x)$
\begin{align*}
\varepsilon_{a}^*(x)&=y_N,\\
f_{a}^*(x) &= \Phi^{\jj}_{\ii}\left(y+(0,\dots,0,1)\right),\\
e_{a}^*(x) &= \begin{cases}
\Phi^{\jj}_{\ii}\left(y-(0,\dots,0,1) \right), & \text{if $\varepsilon_a^*(x)>0,$}\\0,&\text{else.}
\end{cases}
\end{align*}

By \eqref{lambdaininfty}, \eqref{lambdaininftystar} we get a crystal structures isomorphic to $B(\lambda)$ 
\begin{align*}
\mathcal{L}_{\ii}(\lambda)^*&=\{x \in \mathcal{L}_{\ii} \mid \forall a\in [n-1] \ \varepsilon_a(x) \le \lambda_a\}\subset \mathbb{N}^N,\\
\mathcal{L}_{\ii}(\lambda)&=\{x \in \mathcal{L}_{\ii} \mid  \forall a\in [n-1] \ \varepsilon^*_a(x) \le \lambda_a \}\subset \mathbb{N}^N
\end{align*}
induced from $\mathcal{L}_{\ii}(\infty)^*$ and $\mathcal{L}_{\ii}$, respectively.

\section{Order lattices associated to crystal structures}\label{orderlattices}

\subsection{Order lattice of (dual) Reineke crossings}

In this section we introduce the order lattices $\Gamma_a$ and $\Gamma^{*}_a$ for $a\in I$. The main combinatorial object we use is the \emph{wiring diagram} $\mathcal{D}_{\ii}$ associated to $\ii=(i_1,i_2,\ldots i_N)\in \W$. The diagram $\mathcal{D}_{\ii}$ consists of a family of $n$ piecewise-straight lines, called \emph{wires} with labels in the set $[n]$. Each vertex $(p,q)$ of $\mathcal{D}_{\ii}$ (i.e. an intersection of the wires $p$ and $q$) represents a letter $j$ in $\ii$. If the vertex represents the letter $j\in [n-1]$, then $j-1$ is equal to the number of wires running below this intersection. The word $\ii$ can be read off from $\mathcal{D}_{\ii}$ by reading the levels of the vertices from left to right.

For every $\ii\in \W$ the vertices of $\mathcal{D}_{\ii}$ are in bijection with $\Phi^+$ using \eqref{eq:rootident}. The ordering $\le_{\ii}$ defined in \eqref{eq:rootorder} can be read off from $\mathcal{D}_{\ii}$ by reading the vertices from left to right.

\begin{ex} Let $n=5$ and $\ii=(2,1,2,3,4,3,2,1,3,2)$. The wiring diagram $\mathcal{D}_{\ii}$ and the corresponding total ordering $$(2,3)<_{\ii}(1,3)<_{\ii}(1,2)<_{\ii}(1,4)<_{\ii}(1,5)<_{\ii}(4,5)<_{\ii}(2,5)<_{\ii}(3,5)<_{\ii}(2,4)$$ on $\Phi^+$ are depicted below. 
\begin{center}

\begin{tikzpicture}[scale=.65]

\node at (-.5,0) {$1$};
\node at (-.5,1) {$2$};
\node at (-.5,2) {$3$};
\node at (-.5,3) {$4$};
\node at (-.5,4) {$5$};
\node at(1.6,-1){$2$};
\node at(3,-1){$1$};
\node at(4.6,-1){$2$};
\node at(5.7,-1){$3$};
\node at(6.8,-1){$4$};
\node at(7.7,-1){$3$};
\node at(8.6,-1){$2$};
\node at(9.6,-1){$1$};
\node at(10.7,-1){$3$};
\node at(11.5,-1){$2$};

\draw (0,0) --(1,0) -- node[xshift=.9cm,yshift=-.19cm,above]{$\scriptstyle{(1,3)}$} node[xshift=5.2cm,yshift=-0.3cm,above]{$\scriptstyle{(3,5)}$} (2,0) -- (3,1) -- (4,1) -- (5,2) -- (6,3) --(7,4) -- (10,4) -- (13,4);
\draw (0,1) -- (1,1) -- (2,2) -- (4,2) -- (5,1) -- (8,1) -- (9,2) -- (10,2) --(11,3) -- (13,3);
\draw (0,2) -- node[xshift=0.85cm,yshift=-1cm,above]{$\scriptstyle{(2,3)}$} node[xshift=2.7cm,yshift=-1.1cm,above]{$\scriptstyle{(1,2)}$} node[xshift=5.2cm,yshift=-1.1cm,above]{$\scriptstyle{(2,5)}$} (1,2) -- (4,0) -- (9,0) -- (10,1) -- (11,1) -- (12,2) -- (13,2) ;
\draw (0,3) -- node[xshift=2cm,yshift=-1.1cm,above]{$\scriptstyle{(1,4)}$} node[xshift=3.4cm,yshift=-1.1cm,above]{$\scriptstyle{(4,5)}$} node[xshift=5.9cm,yshift=-1.7cm,above]{$\scriptstyle{(2,4)}$} (5,3) -- (6,2) -- (7,2) -- (8,3) --(10,3) -- (11,2) -- (12,1) -- (13,1) ;
\draw (0,4) -- node[xshift=2.3cm,yshift=-1cm,above]{$\scriptstyle{(1,5)}$} (6,4) -- (7,3) -- (8,2) -- (9,1) -- (10,0) -- (13,0) ;
\end{tikzpicture}
\end{center}
\end{ex}

\begin{defi} To $a\in [n-1]$ we associate an oriented graph $\mathcal{D}_{\ii}(a)$ ($\mathcal{D}_{\ii}^{*}(a)$) as follows. We orient the wires $p$ of $\mathcal{D}_{\ii}$ from left to right (right to left) if $p\le a$, and from right to left (left to right) if $p>a$. An \emph{$a$-(dual) Reineke crossing} $\gamma$ is a sequence $(v_1,\ldots,v_k)$ of vertices of $\mathcal{D}_{\ii}$ which are connected by oriented edges $v_1 \rightarrow v_1 \rightarrow \ldots \rightarrow v_{k}$ in $\mathcal{D}_{\ii}(a)$ ($\mathcal{D}_{\ii}^{*}(a)$) satisfying the following two conditions:
\begin{itemize}
\item $v_1$ is the leftmost (rightmost) vertex of the wire $a$ and $v_k$ is the leftmost (rightmost) vertex of the wire $a+1$,
\item whenever $v_j, v_{j+1}, v_{j+2}$ lie on the same wire $p$ in $\mathcal{D}_{\ii}$ and the vertex $v_{j+1}$ lies on the intersection the wires $p$ and $q$, we have
\begin{align*} p > q & \quad \text{if }q\le a, \\ 
p<q & \quad \text{if }a+1 \le q. 
\end{align*}
\end{itemize}

By \cite{GKS1} the set $\Gamma_a=\Gamma_a(\ii)$ ($\Gamma_a^{*}=\Gamma_a^{*}(\ii)$) $a$-(dual) Reineke crossings carries the structure of an order lattice defining $\preceq$ as follows. Let $\gamma_1,\gamma_2\in \Gamma_a$ ($\gamma_1,\gamma_2\in \Gamma^{*}_a)$. We set $\gamma_1 \preceq \gamma_2$ if all vertices of $\gamma_1$ lie in the region of $\mathcal{D}_{\ii}$ cut out by $\gamma_2$.
\end{defi}

\begin{defi} The set $T_\gamma$ of \emph{turning points} of a (dual) $a$-Reineke crossing $\gamma$ consists of the vertices of $\gamma$, such that the oriented edge with sink $\gamma$ and the oriented edge with source $\gamma$ lie on different wires.
\end{defi}

We identify $k\in[N]$ with the $k$-th vertex $(p,q)$ in  $\mathcal{D}_{\ii}$ from left.
\begin{defi}\label{def:vectors}Let $r$ and $s$ be the maps associating to $\gamma$ in either $\Gamma_a$ or $\Gamma^{*}_a$ a vector in $\mathbb{Z}^{N}$ by 
\begin{align*}
\left(r({\gamma})\right)_{p,q} &:= \begin{cases} \sgn(q-p), &  \text{if $v_{p,q}\in T_{\gamma},$} \\
0, & \text{else,}\end{cases} \\
\left(s({\gamma})\right)_{p,q} &:= \begin{cases} 1, & \text{if $v_{p,q}\in \gamma,$ $p\le a <q \text{ or }q\le a <p,$} \\
-1, & v_{p,q}\in \gamma\setminus T_{\gamma}, \ a<p,q \text{ or } p,q\le a,
\\ 0 & \text{else.}\end{cases}
\end{align*}

\end{defi}

\begin{ex}
Let $n=5$. The vertices lying on the red path below form the $3-$rigorous path $\gamma=(v_{3,2},v_{3,1},v_{1,2},v_{2,5},v_{2,4},v_{4,5},v_{4,1})$.

\begin{center}

\begin{tikzpicture}[scale=.75]

\node at (-.5,0) {$1$};
\node at (-.5,1) {$2$};
\node at (-.5,2) {$3$};
\node at (-.5,3) {$4$};
\node at (-.5,4) {$5$};

\draw (0,0) -- node[yshift=-0.033cm]{\textbf{>}} (1,0) -- (2,0) -- (3,1) -- node[yshift=-0.033cm]{\textbf{>}} (4,1) -- (5,2) -- (6,3) --(7,4) -- (10,4) -- node[xshift=-0.8cm,yshift=-0.033cm]{\textbf{>}} (13,4);
\draw (0,1) -- node[yshift=-0.033cm]{\textbf{>}} (1,1) -- (2,2) -- node[yshift=-0.033cm]{\textbf{>}} (4,2) -- (5,1) -- node[yshift=-0.033cm]{\textbf{>}} (8,1) -- (9,2) -- node[yshift=-0.033cm]{\textbf{>}} (10,2) --(11,3) -- node[yshift=-0.033cm]{\textbf{>}} (13,3);
\draw[line width=1.25mm, red] (0,2) -- (1,2)-- (2.8,0.7) -- (3,1) --  (4,1) -- (4.5,1.5) -- (5,1) -- (8,1) -- (9,2) -- (10,2) -- (10.5,2.5) -- (10,3) -- (8,3) -- (7,2) -- (6,2) -- (5,3) -- (0,3); 
\draw (2.8,0.7) -- (4,0) -- node[yshift=-0.033cm]{\textbf{>}} (9,0) -- (10,1) -- node[yshift=-0.033cm]{\textbf{>}} (11,1) -- (12,2) -- (13,2) ;
\draw (0,3) -- node[yshift=-0.033cm]{\textbf{<}} (5,3) -- (6,2) -- node[yshift=-0.033cm]{\textbf{<}} (7,2) -- (8,3) -- node[yshift=-0.033cm]{\textbf{<}} (10,3) -- (11,2) -- (12,1) -- (13,1) ;
\draw (0,4) -- node[xshift=-0.3cm,yshift=-0.033cm]{\textbf{<}} (6,4) -- (7,3) -- (8,2) -- (9,1) -- (10,0) -- node[yshift=-0.033cm]{\textbf{<}} (13,0) ;
\end{tikzpicture}

\end{center}
 We have $$r({\gamma})=(0,-1,1,0,0,0,0,0,1,0), \quad s({\gamma})=(-1,0,0,1,0,-1,1,0,1,0).$$
\end{ex}

\subsection{Order lattice of Kashiwara a-crossings}
We fix $\ii \in \W$. Let $\Phi^+_{\ii}=(\beta_1,\beta_2,\ldots,\beta_N)$ be the sequence of positive roots ordered with respect to $\le_{\ii}$.

For $k\in [N]$ we define the sequence 
$$\upsilon(k)=(\beta_k, \beta_{k+1},\ldots, \beta_{N}).$$
Thus $\upsilon(k)_{\ell}=\beta_{\ell+k-1}$ for $1\le \ell \le N-k-1$.
We define for $a\in [n-1]$ the set of \emph{Kashiwara $a$-crossings} to be $\Upsilon_a(\ii)=\Upsilon_a:=\{\upsilon(k) \mid i_k=a\}$. We further define a total order $\preceq$ on $\Upsilon_a(\ii)$ by $\upsilon(k_1) \preceq \upsilon(k_2) \iff k_2\le k_1.$

\begin{defi} We associate two vectors $r(\upsilon),s(\upsilon) \in\mathbb{Z}^{N}$ to a Kashiwara $a$-crossing $\upsilon$ by 
\begin{equation*}
\left(r(\upsilon)\right)_{\ell} := \begin{cases} 1 & \upsilon_1=\beta_k, \\
0 & \text{else,}\end{cases} \qquad 
\left(s(\upsilon)\right)_{\ell} := \begin{cases} 1 & \upsilon_1=\beta_j, \\
-1 & \upsilon_i=\beta_{\ell} \text{ for }i \ge 2 \text{ and} \ i_{\ell}=a,  \\
2 & \upsilon_i=\beta_{\ell} \text{ for }i \ge 2 \text{ and} \ i_{\ell}\in \{a\pm 1\}
\\ 0 & \text{else.}\end{cases}
\end{equation*}

\end{defi}

\section{Explicit description of crystal structures}
The order lattices $\Gamma_a$, $\Gamma^{*}_a$ and $\Upsilon_a$ with associated vectors can be used to give an explicit description of the crystals as follows. We denote by $\Gamma_a^{\text{opp}}$ the lattice $\Gamma_a$ with reversed order. For $x$ in either of $ \mathcal{L}_{\ii}(\lambda)$, $\mathcal{L}_{\ii}(\lambda)^*,$ $ \mathcal{S}_{\ii}(\lambda)^*$ or  $\mathcal{S}_{\ii}(\lambda)^*$ we set
\begin{align*}
\mathbb{L}_a&:=\Gamma_a,\,&\rho&:=r:\Gamma_a\rightarrow \Z^N ,\quad &\sigma&:=s:\Gamma_a\rightarrow \Z^N, \qquad &&\text{for } x\in\mathcal \mathcal{L}_{\ii}(\lambda),\\
\mathbb{L}_a&:=\Gamma_a^*,\, &\rho&:=r:\Gamma_a^*\rightarrow \Z^N,\quad &\sigma&:=s:\Gamma_a^*\rightarrow \Z^N,\qquad &&\text{for } x\in\mathcal \mathcal{L}_{\ii}(\lambda)^*,\\
\mathbb{L}_a&:=\Gamma_a^{\text{opp}},\, &\rho&:=s:\Gamma_a^{\text{opp}}\rightarrow \Z^N,\quad &\sigma&:=r:\Gamma_a^{\text{opp}}\rightarrow \Z^N,\qquad &&\text{for } x\in\mathcal \mathcal{S}_{\ii}(\lambda)^*,\\
\mathbb{L}_a&:=\Upsilon_a,\, &\rho&:=r:\Upsilon_a\rightarrow \Z^N,\quad &\sigma&:=s:\Upsilon_a\rightarrow \Z^N,\qquad &&\text{for } x\in\mathcal \mathcal{S}_{\ii}(\lambda).
\end{align*}
\begin{thm}\label{crossingformulas} Let $\lambda\in P^+$, $\ii\in \W$ and $a\in [n-1]$. Then for $x$ in either of $ \mathcal{L}_{\ii}(\lambda)$, $\mathcal{L}_{\ii}(\lambda)^*,$ $ \mathcal{S}_{\ii}(\lambda)^*$ or  $\mathcal{S}_{\ii}(\lambda)^*$ we have
\begin{align*}
\varepsilon_a (x) &= \max\{\left\langle x,\sigma(\gamma)\right\rangle \mid \gamma\in \mathbb{L}_a\},\\
{f}_a(x) &= \begin{cases} x+\rho(\gamma_{x}) & \text{ if }\varphi_a(x)>0,\\
0 & \text{ else,} \end{cases}\\
{e}_a(x) &= \begin{cases} x-\rho(\gamma^{x}) & \text{ if }\varepsilon_a(x)>0,\\
0 & \text{ else,} \end{cases}
\end{align*}
where $\gamma^x\in\mathbb{L}_a$ is minimal with $\langle x, \sigma(\gamma^x)\rangle=\varepsilon_a(x)$ and $\gamma_x \in\mathbb{L}_a$ is maximal with $\langle x, \sigma(\gamma_x)\rangle=\varepsilon_a(x)$.
\end{thm}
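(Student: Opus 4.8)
The plan is to reduce the statement to the level of $B(\infty)$ and then to dispatch the four models, using the explicitly given string crystal as the computational anchor and propagating to the remaining three through the combinatorial dualities of the wiring diagram.

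First I would reduce to $B(\infty)$. By \eqref{lambdaininfty} and \eqref{lambdaininftystar} each of $\mathcal{L}_{\ii}(\lambda)$, $\mathcal{L}_{\ii}(\lambda)^*$, $\mathcal{S}_{\ii}(\lambda)$, $\mathcal{S}_{\ii}(\lambda)^*$ is the realization of $B(\lambda)$ inside the corresponding $B(\infty)$-model, with $\wt$, the function $\varepsilon_a$ (resp.\ $\varepsilon_a^*$) and the raising operator $e_a$ (resp.\ $e_a^*$) induced unchanged from the model, while $f_a$ (resp.\ $f_a^*$) agrees with the induced operator when $\varphi_a(x)>0$ and is $0$ otherwise. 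Consequently the $\varepsilon_a$-formula and the $e_a$-formula on $B(\lambda)$ coincide with those on the ambient $B(\infty)$-model, and the $f_a$-formula is the ambient formula $x+\rho(\gamma_x)$ capped by the guard $\varphi_a(x)>0$ — exactly the case distinction in the statement. It therefore suffices to prove the three formulas on the four $B(\infty)$-models $\mathcal{S}_{\ii}(\infty)$, $\mathcal{S}_{\ii}(\infty)^*$, $\mathcal{L}_{\ii}(\infty)$, $\mathcal{L}_{\ii}(\infty)^*$.

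The case $\mathcal{S}_{\ii}(\lambda)$ (where $\mathbb{L}_a=\Upsilon_a$, $\rho=r$, $\sigma=s$) I would settle by a direct comparison with the crystal structure on $\mathcal{S}_{\ii}(\infty)$ from Section \ref{stringcrys}. Writing $\upsilon(k)\in\Upsilon_a$ for the index $k$ with $i_k=a$, one checks from the definition of $s$ that $\langle x,s(\upsilon(k))\rangle=\eta_k(x)$, whence $\max\{\langle x,s(\upsilon)\rangle\mid\upsilon\in\Upsilon_a\}=\max\{\eta_k(x)\mid i_k=a\}=\varepsilon_a(x)$. Since $\upsilon(k_1)\preceq\upsilon(k_2)\iff k_2\le k_1$, the $\preceq$-maximal maximizer corresponds to the minimal index $\ell^x$ and the $\preceq$-minimal maximizer to the maximal index $\ell_x$; as $r(\upsilon(k))$ is the $k$-th unit vector, the formulas $f_a(x)=x+r(\upsilon(\ell^x))$ and $e_a(x)=x-r(\upsilon(\ell_x))$ reproduce the operators of Section \ref{stringcrys} verbatim.

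For the case $\mathcal{L}_{\ii}(\lambda)$ (where $\mathbb{L}_a=\Gamma_a$, $\rho=r$, $\sigma=s$) I would use the defining identity $\varepsilon_a(x)=(\Phi^{\ii}_{\jj}(x))_1$ for a word $\jj$ with $j_1=a$ and analyze the first coordinate of Lusztig's piecewise-linear change-of-word map. Here I would invoke \cite{GKS1}: the $a$-Reineke crossings index the linear pieces of $x\mapsto(\Phi^{\ii}_{\jj}(x))_1$, the value of the piece attached to $\gamma$ is $\langle x,s(\gamma)\rangle$, the lattice $(\Gamma_a,\preceq)$ organizes these pieces, and the turning-point vector $r(\gamma)$ records the coordinate change of the associated Kashiwara operator; this yields $\varepsilon_a(x)=\max\{\langle x,s(\gamma)\rangle\mid\gamma\in\Gamma_a\}$ together with the operator formulas, once the maximizers are known to form a $\preceq$-interval. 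The remaining two cases I would obtain by duality. The involution $*$ is realized on Lusztig data by word reversal together with the Dynkin involution $a\mapsto n-a$; under it the oriented diagram $\mathcal{D}_{\ii}(a)$ is carried to $\mathcal{D}^{*}_{\ii}(a)$, inducing an order-preserving bijection $\Gamma_a\xrightarrow{\sim}\Gamma^{*}_a$ compatible with $r$ and $s$ and interchanging the ordinary and $*$-operators, converting the $\mathcal{L}_{\ii}(\lambda)$-formula into the $\mathcal{L}_{\ii}(\lambda)^*$-formula. Finally, the piecewise-linear bijection identifying the string and Lusztig models of $B(\infty)$ in the fixed direction $\ii$ identifies $\mathcal{S}_{\ii}(\infty)^*$ with $\mathcal{L}_{\ii}(\infty)$; combinatorially it transposes the two defining features of a crossing — the turning points and the $\pm1$-support — thereby exchanging $r\leftrightarrow s$ and reversing the lattice order, which is precisely the passage to $\mathbb{L}_a=\Gamma_a^{\text{opp}}$, $\rho=s$, $\sigma=r$ recorded for $\mathcal{S}_{\ii}(\lambda)^*$.

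The main obstacle is the tropical analysis underlying the $\mathcal{L}_{\ii}(\lambda)$ case together with the bookkeeping of the two dualities. Concretely, the delicate points are: to prove that the extremal coordinate of the change-of-word map equals the maximum of $\langle x,s(\gamma)\rangle$ over all $a$-Reineke crossings and not merely a bound, and that its set of maximizers is a $\preceq$-interval, so that $\gamma^x$ and $\gamma_x$ exist and the vectors $x-r(\gamma^x)$, $x+r(\gamma_x)$ remain in the cone and realize $e_a,f_a$; and to verify that the $*$-involution and the string–Lusztig bijection act on the wiring diagram exactly as the claimed swaps $(\Gamma_a,r,s)\leftrightarrow(\Gamma^{*}_a,r,s)$ and $(\Gamma_a,r,s)\leftrightarrow(\Gamma_a^{\text{opp}},s,r)$, since the uniformity of the theorem rests entirely on these two combinatorial identifications.
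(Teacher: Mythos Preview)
Your reduction to $B(\infty)$ via \eqref{lambdaininfty}--\eqref{lambdaininftystar}, your direct verification for $\mathcal{S}_{\ii}(\lambda)$ by matching $\langle x,s(\upsilon(k))\rangle$ with $\eta_k(x)$, and your appeal to \cite{GKS1} for $\mathcal{L}_{\ii}(\lambda)$ are exactly what the paper does. The paper's proof is in fact nothing more than these citations: it invokes \cite[Theorems~2.13 and~2.20]{GKS1} for the two Lusztig cases, cites \cite{GKS3} for $\mathcal{S}_{\ii}(\lambda)^*$, and observes that $\mathcal{S}_{\ii}(\lambda)$ is a restatement of Section~\ref{stringcrys}.

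Where you diverge is in proposing to \emph{derive} the $\mathcal{L}_{\ii}(\lambda)^*$ and $\mathcal{S}_{\ii}(\lambda)^*$ cases by duality from the $\mathcal{L}_{\ii}(\lambda)$ case, rather than citing them. This is a reasonable strategy and is presumably close to how \cite{GKS1} and \cite{GKS3} actually proceed, but one step is not as clean as you present it. The Kashiwara involution on $\ii$-Lusztig data is realized by $x\mapsto \Phi^{\ii^*}_{\ii}(x^{\opp})$ (cf.\ the proof of Proposition~\ref{unimod}); the linear part $\opp$ sends $\mathcal{D}_{\ii}$ to $\mathcal{D}_{\ii^*}$ and hence identifies $\Gamma_a(\ii)$ with $\Gamma^*_{a^*}(\ii^*)$, not with $\Gamma^*_a(\ii)$. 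To land back in $\Gamma^*_a(\ii)$ one must track the crossings through the piecewise-linear map $\Phi^{\ii^*}_{\ii}$, which is precisely the nontrivial tropical analysis you flag as the main obstacle; it is not a mere orientation reversal inside a fixed wiring diagram. Your sketch for $\mathcal{S}_{\ii}(\lambda)^*$ via the string--Lusztig comparison is closer to the mark: the relevant linear map is $F_{\ii}$ (equivalently $G_{\ii}(\lambda)$), and the identity $F_{\ii}^{t}\circ s(\gamma)=r(\gamma)$ that effects the swap $r\leftrightarrow s$ is exactly what the paper establishes later in the proof of Theorem~\ref{new}. So your plan is sound, but the two duality steps amount to reproving the content of the references the paper cites, and the first of them needs more than the wiring-diagram symmetry you invoke.
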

\begin{proof}
For $x\in\mathcal{L}_{\ii}(\lambda)$ and $x\in\mathcal{L}_{\ii}(\lambda)^*$ the claim is a consequence of \cite[Theorem 2.13]{GKS1} and \cite[Theorem 2.20]{GKS1}, respectively, using the embeddings $B(\lambda)\hookrightarrow B(\infty)$ and $B(\lambda) \hookrightarrow B(\infty)^*$ given by \eqref{lambdaininfty} and \eqref{lambdaininftystar}. For $x\in\mathcal{S}_{\ii}(\lambda)^*$ and $x\in\mathcal{L}_{\ii}(\lambda)^*$ the claim is proved in \cite{GKS3} and for $x\in\mathcal{S}_{\ii}(\lambda)$ it is a reformulation of the crystal structure given in Section \ref{stringcrys}.
\end{proof}

\section{Associated cones and polytopes}\label{conesandpol}

In this section we derive defining inequalities of the cones and polytopes of Lusztig- and string data from the order lattices defined in Section \ref{orderlattices}.

\subsection{Definining inequalities}\label{sec:inequ}

\begin{thm}\label{2}
Let $\mathcal{L}_{\ii}(\infty)_{\mathbb{R}}$ and $\mathcal{S}_{\ii}(\infty)_{\mathbb{R}}$ be the conic hulls in $\mathbb{R}^N$ of $\mathcal{L}_{\ii}(\infty)$ and $\mathcal{S}_{\ii}(\infty)$, respectively. We have 
\begin{align} \label{eq:a}
\mathcal{L}_{\ii}(\infty)_{\mathbb{R}}&=\{ x \in \mathbb{R}^N \mid \left<r(\upsilon),x\right> \ge 0 \ \forall a\in[n],\, \forall\upsilon\in \Upsilon_a\} \\ \label{eq:b}
\mathcal{S}_{\ii}(\infty)_{\mathbb{R}}&=\{ x \in \mathbb{R}^N \mid \left<r(\gamma),x\right> \ge 0 \  \forall a\in[n],\, \forall\gamma\in \Gamma^{*}_a\}.
\end{align}

For $\lambda\in P^+$ let $\mathcal{L}_{\ii}(\lambda)_{\mathbb{R}}$, $\mathcal{L}_{\ii}(\lambda)^*_{\mathbb{R}}$, $\mathcal{S}_{\ii}(\lambda)^{*}_{\mathbb{R}}$ and $\mathcal{S}_{\ii}(\lambda)_{\mathbb{R}}$  be the subsets of vectors in $\mathbb{R}^N$ satisfying the defining inequalities of $\mathcal{L}_{\ii}(\lambda)$, $\mathcal{L}_{\ii}(\lambda)^*$, $\mathcal{S}_{\ii}(\lambda)^{*}$ and $\mathcal{S}_{\ii}(\lambda)$, respectively. We have \begin{align*}
\mathcal{L}_{\ii}(\lambda)_{\mathbb{R}}&=\{x\in \mathcal{L}_{\ii}(\infty)_{\mathbb{R}} \mid \lambda_a - \left<s(\gamma),x\right> \ge 0 \  \forall a\in[n],\, \forall \gamma\in \Gamma^{*}_a\}, \\
\mathcal{L}_{\ii}(\lambda)^*_{\mathbb{R}}&=\{x\in  \mathcal{L}_{\ii}(\infty)_{\mathbb{R}} \mid \lambda_a - \left<s(\gamma),x\right> \ge 0 \  \forall a\in[n],\, \forall \gamma\in \Gamma_a\}, \\ 
\mathcal{S}_{\ii}(\lambda)^*_{\mathbb{R}}&=\{x \in \mathcal{S}_{\ii}(\infty)_{\mathbb{R}} \mid \lambda_a-\left<r(\gamma),x\right> \ge 0 \ \forall a\in[n], \forall \gamma \in \Gamma_a\},\\
\mathcal{S}_{\ii}(\lambda)_{\mathbb{R}}&=\{x \in \mathcal{S}_{\ii}(\infty)_{\mathbb{R}} \mid \lambda_a-\left<s(\upsilon),x\right> \ge 0 \ \forall a\in[n], \forall \upsilon \in \Upsilon_a \}.
\end{align*}
\end{thm}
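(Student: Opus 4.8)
The plan is to exhibit each of the four polytopes as the intersection of its ambient cone --- described by \eqref{eq:a} or \eqref{eq:b} --- with the finitely many affine half-spaces obtained by linearizing a single inequality $\varepsilon_a\le\lambda_a$ or $\varepsilon_a^*\le\lambda_a$. The organizing principle, visible in the table of the introduction, is that each polytope carries one of the two crystal structures on its cone (the standard one for the unstarred polytopes, the $*$-twisted one for the starred ones) but, by \eqref{lambdaininfty} and \eqref{lambdaininftystar}, is \emph{cut out} by the $\varepsilon$-function of the \emph{opposite} structure. The maximum formula for that opposite $\varepsilon$ is exactly the one Theorem \ref{crossingformulas} attaches to the dual polytope, so no crystal-theoretic input beyond Theorem \ref{crossingformulas} is needed.

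I would first settle the two cone identities. Identity \eqref{eq:a} is elementary: the $\ii$-Lusztig data exhaust $\mathbb{N}^N$, whence $\mathcal{L}_{\ii}(\infty)_{\mathbb{R}}=\mathbb{R}^N_{\ge 0}$; since $r(\upsilon(k))=e_k$ is the $k$-th coordinate vector and, as $a$ runs through $[n-1]$ and $\upsilon(k)$ through $\Upsilon_a$, the index $k$ runs through all of $[N]$, the inequalities $\langle r(\upsilon),x\rangle\ge0$ are just $x_k\ge0$. For \eqref{eq:b} I would invoke the description of the string cone by dual Reineke crossings from \cite{GKS1}: the homogeneous inequalities $\langle r(\gamma),x\rangle\ge0$, $\gamma\in\Gamma^*_a$, cut out $\mathcal{S}_{\ii}(\infty)_{\mathbb{R}}$.

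For the polytopes I would read off, from Theorem \ref{crossingformulas} (equivalently the $B(\infty)$-level statements of \cite{GKS1,GKS3} of which it is the restriction to $B(\lambda)$), the four maximum formulas, valid on the entire cone: on Lusztig data $\varepsilon_a(x)=\max_{\gamma\in\Gamma_a}\langle x,s(\gamma)\rangle$ and $\varepsilon_a^*(x)=\max_{\gamma\in\Gamma^*_a}\langle x,s(\gamma)\rangle$; on string data $\varepsilon_a(x)=\max_{\upsilon\in\Upsilon_a}\langle x,s(\upsilon)\rangle$ and $\varepsilon_a^*(x)=\max_{\gamma\in\Gamma_a}\langle x,r(\gamma)\rangle$ (passing to $\Gamma_a^{\mathrm{opp}}$ is immaterial here, since only the value of the maximum enters). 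The decisive step is then purely formal: $\max_{\gamma}\langle x,v(\gamma)\rangle\le\lambda_a$ is equivalent to the finite system $\lambda_a-\langle v(\gamma),x\rangle\ge0$, one inequality per crossing. Substituting into each polytope the formula for the $\varepsilon$ that cuts it out (read from \eqref{lambdaininfty}--\eqref{lambdaininftystar}) yields exactly the listed half-spaces, and since every functional involved is linear the real solution set of these half-spaces together with the cone inequalities is the asserted real polytope.

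I expect the one real point of care to be bookkeeping rather than analysis. For each polytope one must correctly identify which of $\varepsilon_a,\varepsilon_a^*$ performs the cut, hence which lattice--vector pair among $\Gamma_a/s$, $\Gamma^*_a/s$, $\Upsilon_a/s$, $\Gamma_a/r$ appears in the half-spaces, and one must be sure to use the maximum formula in its $B(\infty)$-form, valid on the whole cone, rather than only its restriction to $B(\lambda)$. Keeping this pairing straight is precisely what produces the Lusztig--string duality recorded in the introduction's table.
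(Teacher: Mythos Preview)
Your proposal is correct and follows essentially the same route as the paper's own proof: establish \eqref{eq:a} directly from the definition (the $r(\upsilon(k))$ are the coordinate vectors), cite prior work for \eqref{eq:b} (the paper refers to \cite{GP} and \cite[Theorem~4.4]{GKS1}), and then obtain the four polytope descriptions by combining the embeddings \eqref{lambdaininfty}, \eqref{lambdaininftystar} with the maximum formulas for $\varepsilon_a$, $\varepsilon_a^*$ supplied by Theorem~\ref{crossingformulas}. Your explicit remark that the maximum formulas must be invoked at the $B(\infty)$-level (on the whole cone) rather than only after restriction to $B(\lambda)$ is exactly the right point of care, and your bookkeeping of which $\varepsilon$ cuts which polytope matches the paper.
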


\begin{proof} Equality \eqref{eq:a} follows from the definition. Equality \eqref{eq:b} was proven in \cite{GP} (see also \cite[Theorem 4.4]{GKS1}). 

The defining inequalities for the polytopes follow from \eqref{lambdaininfty}, \eqref{lambdaininftystar} and Theorem \ref{crossingformulas}.
\end{proof}

\subsection{Unimodular isomorphisms between polytopes}

For $a\in [n-1]$ we set $a^*=N-i$. We fix $\lambda=\sum_{a=1}^{n-1}\lambda_a \omega_a\in P^+$ and write $\underline{\lambda}:=(\lambda_{i_1},\lambda_{i_2},\ldots \lambda_{i_N})\in \mathbb{N}^N$ and $\lambda^*:=\sum_{a=1}^{n-1}\lambda_{a^*}\omega_{a}$ . We fix $\ii=(i_1,\ldots,i_N)\in \W$ and set $\ii^*=(i_1^*,\ldots, i_N^*)$.

We call two polytopes $P_1,P_2 \subset \mathbb{R}^m$ \emph{affine unimodular isomorphic} if there exists a lattice isomorphism $g: \mathbb{Z}^m \rightarrow \mathbb{Z}^m$ and a vector $v\in \mathbb{Z}^m$ such that $g(P_1)+v=P_2$.

Let $(c_{i,j})$ be the Cartan matrix of $\text{sl}_n(\mathbb{C})$. We define the maps
\begin{align}
\notag
F_{\ii}(\lambda): \mathbb{Z}^N &\rightarrow\mathbb{Z}^N,\qquad (F_{\ii}(x))_k=x_k+\displaystyle\sum_{N\ge \ell > k} c_{i_k,i_{\ell}}x_{\ell}, \\
\notag
G_{\ii}(\lambda): \mathbb{Z}^N &\rightarrow\mathbb{Z}^N,\qquad
G_{\ii}(\lambda)(x)=\underline{\lambda}-F_{\ii}(x),\\
\opp:\mathbb{Z}^N &\rightarrow\mathbb{Z}^N,\qquad\notag
(x_1,\dots, x_N)^{\opp}=(x_N, \dots, x_1).
\end{align}

\begin{prop}\label{unimod} Let $\lambda \in P^+$ and $\ii \in \W$. We have affine unimodular isomorphisms:

$$\mathcal{S}_{\ii}(\lambda)_{\mathbb{R}}^* \xrightarrow{G_{\ii}(\lambda)} \mathcal{L}_{\ii}(\lambda^*)_{\mathbb{R}} \xleftarrow{\opp} \mathcal{L}_{\ii^*}(\lambda)_{\mathbb{R}}^*.$$

\end{prop}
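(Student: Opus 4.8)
The plan is to establish each of the two arrows separately by matching the explicit inequality descriptions of Theorem \ref{2}, after first observing that both maps are affine unimodular automorphisms of $\mathbb{Z}^N$. For $G_{\ii}(\lambda)$ this is immediate: the matrix of $F_{\ii}$ is upper triangular with $1$'s on the diagonal (its strictly-upper entries being the Cartan integers $c_{i_k,i_\ell}$), so $\det F_{\ii}=1$, and $G_{\ii}(\lambda)(x)=\underline{\lambda}-F_{\ii}(x)$ is an integral translate of a unimodular linear map. For $\opp$ it is clear, being a coordinate reversal. Thus in both cases the content is that the source polytope is carried onto $\mathcal{L}_{\ii}(\lambda^*)_{\mathbb{R}}$, which I will verify at the level of the defining half-spaces.

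For the left arrow $G_{\ii}(\lambda)\colon \mathcal{S}_{\ii}(\lambda)^{*}_{\mathbb{R}}\to\mathcal{L}_{\ii}(\lambda^*)_{\mathbb{R}}$ I would exploit that $F_{\ii}$ is exactly the map computing the string data of Section \ref{stringcrys}, namely $(F_{\ii}(x))_k=\eta_k(x)$. By Theorem \ref{2}, $\mathcal{S}_{\ii}(\lambda)^{*}_{\mathbb{R}}$ is cut out by the cone inequalities $\langle r(\gamma),x\rangle\ge 0$ for $\gamma\in\Gamma^*_a$ together with the highest weight inequalities $\langle r(\gamma),x\rangle\le\lambda_a$ for $\gamma\in\Gamma_a$, while $\mathcal{L}_{\ii}(\lambda^*)_{\mathbb{R}}$ is cut out by $y\ge 0$ (the $\Upsilon_a$-inequalities) together with $(\lambda^*)_a-\langle s(\gamma),y\rangle\ge 0$ for $\gamma\in\Gamma^*_a$. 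The key is the ``cone $\leftrightarrow$ highest weight'' duality of the introductory table. Writing $y=G_{\ii}(\lambda)(x)$ one has $y_k=\lambda_{i_k}-\eta_k(x)$, so $y\ge 0$ is precisely $\eta_k(x)\le\lambda_{i_k}$ for all $k$, i.e. $\max_{i_k=a}\eta_k(x)\le\lambda_a$; this is the highest weight system of $\mathcal{S}_{\ii}(\lambda)^*$, the two forms agreeing because $\max_{\gamma\in\Gamma_a}\langle r(\gamma),x\rangle=\varepsilon_a(x)=\max_{i_k=a}\eta_k(x)$ by Theorem \ref{crossingformulas} and Section \ref{stringcrys}. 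Conversely, for fixed $\gamma\in\Gamma^*_a$ the inequality $(\lambda^*)_a-\langle s(\gamma),y\rangle\ge 0$ becomes, via $y=\underline{\lambda}-F_{\ii}(x)$, the relation $(\lambda^*)_a-\langle s(\gamma),\underline{\lambda}\rangle+\langle F_{\ii}^{\top}s(\gamma),x\rangle\ge 0$, which coincides with the cone inequality $\langle r(\gamma),x\rangle\ge 0$ of $\mathcal{S}_{\ii}(\lambda)^*$ once one knows the two pointwise identities
\[
F_{\ii}^{\top}s(\gamma)=r(\gamma)\qquad\text{and}\qquad \langle s(\gamma),\underline{\lambda}\rangle=(\lambda^*)_a=\lambda_{n-a}\qquad(\gamma\in\Gamma^*_a).
\]

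For the right arrow $\opp\colon \mathcal{L}_{\ii^*}(\lambda)^{*}_{\mathbb{R}}\to\mathcal{L}_{\ii}(\lambda^*)_{\mathbb{R}}$ I would again split the half-spaces. The cone inequalities of both polytopes are $x\ge 0$ (the $\Upsilon$-inequalities), which $\opp$ merely permutes, so the cones match trivially. For the highest weight inequalities one needs a bijection $\Gamma_a(\ii^*)\to\Gamma^*_{n-a}(\ii)$, $\gamma\mapsto\gamma'$, compatible with reversal in the sense $s(\gamma')=\opp(s(\gamma))$; combined with $\lambda_a=(\lambda^*)_{n-a}$ this turns $\lambda_a-\langle s(\gamma),x\rangle\ge 0$ into $(\lambda^*)_{n-a}-\langle s(\gamma'),\opp(x)\rangle\ge 0$. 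The vertical flip of the wiring diagram already identifies the orientations underlying $\Gamma_a(\ii^*)$ and $\Gamma^*_{n-a}(\ii)$ and preserves $s$ coordinatewise (both $\pm1$ patterns transform into one another under the substitutions $p\mapsto n+1-p$, $a\mapsto n-a$); what it does \emph{not} do is exchange the leftmost endpoints used by $a$-Reineke crossings with the rightmost endpoints used by $(n-a)$-dual Reineke crossings, and reconciling this endpoint swap is exactly the role of the position reversal $\opp$.

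The main obstacle is the purely combinatorial input isolated above: the two identities $F_{\ii}^{\top}s(\gamma)=r(\gamma)$ and $\langle s(\gamma),\underline{\lambda}\rangle=\lambda_{n-a}$ for dual Reineke crossings, and the $\opp$-compatible bijection $\Gamma_a(\ii^*)\cong\Gamma^*_{n-a}(\ii)$. None of these arises from a naive symmetry of the single diagram $\mathcal{D}_{\ii}$ — its vertical, horizontal and $180^{\circ}$ symmetries produce the words $\ii^*$, $\ii^{\rev}$ and $(\ii^*)^{\rev}$, never $\ii$ — so they must be proved from the local structure of (dual) Reineke crossings, tracking at each turning point and each non-turning vertex the interaction of the $\pm1$ pattern of $s$ with the Cartan weights $c_{i_\ell,i_k}$. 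I expect these to follow from the crossing calculus of \cite{GKS1}; equivalently, the $\opp$-statement can be read as the assertion that $\opp$ realizes Kashiwara's involution $*$ twisted by the Dynkin diagram automorphism on Lusztig data, which simultaneously pins down the relabeling $a\mapsto n-a$ and the reversal. Once these local identities are available, the half-space bookkeeping above is routine and yields the two affine unimodular isomorphisms.
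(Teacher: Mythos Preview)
Your route is sound in outline but differs from the paper's two-line proof, which dispatches both arrows by citation: for $G_{\ii}(\lambda)$ it invokes \cite[Proposition~8.3]{GKS2} (alternatively \cite{CMM}), and for $\opp$ it uses Lusztig's identity $\Phi^{\ii}_{\ii^*}(x^{*})=x^{\opp}$ from \cite{L93}, so that $\opp$ is literally the composite of Kashiwara's $*$ with the change of reduced word $\ii\to\ii^{*}$ and hence carries $\mathcal{L}_{\ii^{*}}(\lambda)^{*}$ onto $\mathcal{L}_{\ii}(\lambda^{*})$. Your closing remark that $\opp$ ``realizes Kashiwara's involution $*$ twisted by the Dynkin diagram automorphism'' \emph{is} this argument; the paper takes it as the proof rather than as an alternative.

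What you actually sketch---pulling back the half-space systems of Theorem~\ref{2} through $G_{\ii}(\lambda)$ and $\opp$---is not the paper's proof of Proposition~\ref{unimod} but the content of the next result, Theorem~\ref{new}. The two identities you isolate as the ``main obstacle'', $F_{\ii}^{\top}s(\gamma)=r(\gamma)$ and $\langle s(\gamma),\underline{\lambda}\rangle=\lambda_{a^{*}}$, are exactly the inputs used there, with citations to \cite[Theorem~3.11]{G} and \cite[Lemma~9.5]{GKS3}; so your plan recovers Theorem~\ref{new} (hence Proposition~\ref{unimod} a fortiori) with the same external dependencies, the gain being that the mechanism is displayed rather than black-boxed. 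One caution on your wiring-diagram account of the $\opp$ bijection $\Gamma_{a}(\ii^{*})\cong\Gamma^{*}_{n-a}(\ii)$: the vertical flip does match the orientations and the $\pm 1$ pattern of $s$ coordinate-by-coordinate, but applying $\opp$ to the $s$-vector is a reindexing $k\mapsto N+1-k$, not a left/right endpoint swap inside $\mathcal{D}_{\ii}$, so the sentence ``reconciling this endpoint swap is exactly the role of $\opp$'' does not parse as written. The $*$-based argument you mention bypasses this geometric bookkeeping entirely, which is why the paper prefers it.
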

\begin{proof}
The fact that $G_{\ii}(\lambda)$ is an affine unimodular isomorphism is proved in Proposition 8.3 of \cite{GKS2} and can also be deduced from \cite{CMM}.

Note that $*$ on $\mathcal{L}_{\ii}(\infty)$ induces a bijection between $\mathcal{L}_{\ii}(\lambda)$ and $\mathcal{L}_{\ii}(\lambda)^*$. By \cite{L93} we have for $x\in \mathcal{L}_{\ii}$, that $\Phi^{\ii}_{\ii^*}(x^*)=x^{\text{opp}}$ which proves that $\text{opp}$ is a unimodular isomorphism between $\mathcal{L}_{\ii}(\lambda^*)$ and $\mathcal{L}_{\ii^*}(\lambda)^*$.
\end{proof}

\begin{thm}\label{new} 
\begin{enumerate}
\item The map $G_{\ii}(\lambda)$ induces a bijection between the defining inequalities of $\mathcal{S}_{\ii}(\lambda)^*_{\mathbb{R}}$ and the defining inequalities of $\mathcal{L}_{\ii}(\lambda^*)_{\mathbb{R}}$ given in Theorem \ref{2}. 
\item The map $\opp$ induces a bijection between the defining inequalities of $\mathcal{L}_{\ii^*}(\lambda)^*_{\mathbb{R}}$ and  the defining inequalities of $\mathcal{L}_{\ii}(\lambda^*)_{\mathbb{R}}$ given in Theorem \ref{2}.
\end{enumerate}

\end{thm}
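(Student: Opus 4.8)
The plan is to prove both bijections by unwinding the explicit inequality systems recorded in Theorem \ref{2} and tracking how the affine unimodular maps $G_{\ii}(\lambda)$ and $\opp$ act on the linear functionals defining them. The key observation is that an affine unimodular isomorphism $g(\cdot)+v$ sends the inequality $\langle w, x\rangle \ge c$ to the inequality $\langle (g^{-1})^{T} w, y\rangle \ge c + \langle (g^{-1})^{T}w, v\rangle$ on the image; so to establish a \emph{bijection of defining inequalities} it suffices to match up, for each $a\in[n-1]$, the functionals $s(\gamma), r(\gamma), s(\upsilon), r(\upsilon)$ appearing in the two systems under the relevant pull-back, and to check the constant terms agree.

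For part (1), I would start from the description in Theorem \ref{2} of $\mathcal{S}_{\ii}(\lambda)^*_{\mathbb{R}}$, whose cone inequalities are $\langle r(\gamma), x\rangle \ge 0$ for $\gamma\in\Gamma^{*}_a$ and whose highest weight inequalities are $\lambda_a - \langle r(\gamma), x\rangle \ge 0$ for $\gamma\in\Gamma_a$, and from the description of $\mathcal{L}_{\ii}(\lambda^*)_{\mathbb{R}}$, whose cone inequalities are $\langle r(\upsilon), y\rangle \ge 0$ for $\upsilon\in\Upsilon_a$ and whose highest weight inequalities are $(\lambda^*)_a - \langle s(\gamma), y\rangle \ge 0$ for $\gamma\in\Gamma^{*}_a$. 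The table in the introduction already predicts the correspondence: under $G_{\ii}(\lambda)$, the role $\Gamma^{*}_a$ plays as \emph{cone inequalities} for $\mathcal{S}_{\ii}(\lambda)^*$ should become the \emph{highest weight inequalities} for $\mathcal{L}_{\ii}(\lambda^*)$, while the role $\Gamma_a$ plays as \emph{highest weight inequalities} for $\mathcal{S}_{\ii}(\lambda)^*$ should match the cone inequalities indexed by $\Upsilon_a$ — the latter matching being exactly the content of Proposition \ref{unimod} combined with Theorem \ref{crossingformulas}, since $G_{\ii}(\lambda)$ is the crystal-theoretic comparison map. Concretely I would compute the transpose of the linear part $F_{\ii}$ (whose matrix is unitriangular with entries $c_{i_k,i_\ell}$) applied to the functionals $r(\gamma)$ and verify the identity relating $F_{\ii}^{T} r(\gamma)$ to $s(\gamma)$ (up to sign) on the $\Gamma^{*}_a$ side, and similarly that $F_{\ii}^{T}$ carries the $\Gamma_a$-functionals $r(\gamma)$ to the $\Upsilon_a$-functionals $r(\upsilon)$; the affine shift by $\underline{\lambda}$ is what converts the homogeneous cone inequality into the inhomogeneous highest weight inequality and vice versa.

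For part (2), the map $\opp$ reverses coordinates, so it sends the functional $w=(w_1,\dots,w_N)$ to $w^{\opp}=(w_N,\dots,w_1)$, and the task is to verify that reversing a reduced word from $\ii^*$ to $\ii$ together with the substitution $a\mapsto n-a$ identifies the $\Upsilon_a(\ii^*)$ and $\Gamma_a(\ii^*)$ data defining $\mathcal{L}_{\ii^*}(\lambda)^*_{\mathbb{R}}$ with the $\Upsilon_a(\ii)$ and $\Gamma^{*}_a(\ii)$ data defining $\mathcal{L}_{\ii}(\lambda^*)_{\mathbb{R}}$. Here I would lean on the identity $\Phi^{\ii}_{\ii^*}(x^*)=x^{\opp}$ recorded in the proof of Proposition \ref{unimod}, which says that $\opp$ intertwines the $*$-crystal structure on $\mathcal{L}_{\ii^*}$ with the ordinary structure on $\mathcal{L}_{\ii}$ (after the weight twist $\lambda\leftrightarrow\lambda^*$); combined with the uniform formulas of Theorem \ref{crossingformulas} this forces the two inequality systems to correspond. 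The main combinatorial point to check is that coordinate reversal acts compatibly on the wiring diagram: reversing $\ii^*$ to obtain $\ii$ reflects $\mathcal{D}_{\ii^*}$ left-to-right and flips wire labels $p\mapsto n+1-p$, so that an orientation for $\mathcal{D}_{\ii^*}(a)$ becomes an orientation for $\mathcal{D}_{\ii}^{*}(n-a)$, turning $a$-Reineke crossings into $(n-a)$-dual Reineke crossings.

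The main obstacle will be the second bookkeeping step of part (1): verifying the precise sign-and-index identity $F_{\ii}^{T} r(\gamma) = \pm s(\gamma)$ (resp.\ the $\Gamma_a \leftrightarrow \Upsilon_a$ matching) rather than merely an equality of the resulting half-spaces. Because the definitions of $r$ and $s$ in Definition \ref{def:vectors} are case-based on turning points and on whether $q\le a$ or $a<q$, one must show that the unitriangular convolution by the Cartan matrix entries telescopes the turning-point pattern of a Reineke crossing into its edge-pattern; this is a genuine local computation along each crossing, and it is where the duality asserted in the abstract is actually realized. Once this functional-level identity is in hand, the bijection of inequalities is immediate from the affine-unimodular change-of-functional recipe, and part (2) follows formally from Proposition \ref{unimod}.
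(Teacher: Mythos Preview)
Your overall strategy --- pull back the linear functionals through the affine map and match them one by one --- is exactly what the paper does, but there is a genuine error in the matching you propose for part~(1). You take the highest weight inequalities of $\mathcal{S}_{\ii}(\lambda)^*$ to be indexed by $\Gamma_a$ and then try to send them via $F_{\ii}^{T}$ to the $\Upsilon_a$-indexed cone inequalities of $\mathcal{L}_{\ii}(\lambda^*)$. This cannot work: $|\Upsilon_a|$ equals the number of positions $k$ with $i_k=a$, whereas $|\Gamma_a|$ is typically much larger, so no linear map can put these functionals in bijection. The source of the confusion is that the highest weight inequalities of $\mathcal{S}_{\ii}(\lambda)^*$ come from the condition $\varepsilon_a(x)\le\lambda_a$ for the \emph{ordinary} $\varepsilon_a$ on $\mathcal{S}_{\ii}(\infty)$ (see Section~\ref{stringcrys}), which by Theorem~\ref{crossingformulas} is the maximum of $\langle s(\upsilon),x\rangle$ over $\upsilon\in\Upsilon_a$. (The table in the introduction records this correctly; the last two displayed lines of Theorem~\ref{2} are swapped.) With the correct indexing the bijection is $\Upsilon_a\leftrightarrow\Upsilon_a$ and $\Gamma^*_a\leftrightarrow\Gamma^*_a$, and the first of these is a one-line computation since $r(\upsilon(k))$ is a unit vector.

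For the $\Gamma^*_a$ half, the identity you need is $F_{\ii}^{t}\,s(\gamma)=r(\gamma)$, not $F_{\ii}^{T}r(\gamma)=s(\gamma)$ as you wrote; since $F_{\ii}$ is not orthogonal these are not equivalent, and the direction matters when you substitute $y=G_{\ii}(\lambda)(x)$ into $\lambda^*_a-\langle s(\gamma),y\rangle$. The paper does not carry out the ``local computation along each crossing'' that you flag as the main obstacle; instead it identifies $F_{\ii}^{t}$ with the tropicalization $[\mathrm{CA}_r]_{\mathrm{trop}}\circ[\mathrm{NA}_r^{-1}]_{\mathrm{trop}}$ and invokes \cite[Theorem~3.11]{G} to get $F_{\ii}^{t}s(\gamma)=r(\gamma)$ directly, together with \cite[Lemma~9.5]{GKS3} for the constant term $\langle s(\gamma),\underline{\lambda}\rangle=\lambda_{a^*}$. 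Your proposed telescoping argument may well work, but it is a different (and longer) route than the one the paper takes.

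For part~(2) the paper simply says ``immediate from the constructions''; your wiring-diagram reflection argument (left--right flip plus $p\mapsto n+1-p$ on wire labels, turning $\mathcal{D}_{\ii^*}(a)$ into $\mathcal{D}_{\ii}^{*}(n-a)$) is a correct and more explicit way to see this.
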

\begin{proof}
Let $x\in \mathcal{L}_{\ii}(\lambda^*)$ and $\gamma\in \Gamma_{a^*}(\ii)$. By the definition of $G_{\ii}$ we have
$$\lambda_{a^*}-\left\langle s(\gamma), G_{\ii}(\lambda)(x)\right\rangle=\lambda_{a^*}-\left\langle s(\gamma),\underline{\lambda}\right\rangle +\left\langle F_{\ii}^{t}\circ s(\gamma), x\right\rangle.$$
Now
$$\left<s(\gamma),\underline{\lambda}\right>=\displaystyle\sum_{b^*\in [n-1]}\sum_{k: \ i_k=b^*}\lambda_{i_k}s(\gamma)_k.$$
By \cite[Lemma 9.5]{GKS3} we have $\sum_{k: \ i_k=b^*}\lambda_{i_k}s(\gamma)_k=\delta_{b^*,a^*}$. Hence $\left\langle s(\gamma),\underline{\lambda}\right\rangle=\lambda_{a^*}$.
Note that $F_{\ii}^{t}(x)=x_k+\sum_{1\le \ell < k}c_{i_k,i_{\ell}}x_{\ell}$. A direct calculation shows that $F_{\ii}^{t}=[\text{CA}_r]_{trop}\circ[\text{NA}_r^{-1}]_{trop}$ where $[\text{CA}_r]_{trop}$ and $[\text{NA}_r]_{trop}$ are the tropicalisation of the maps defined in Equation (92) and (93) of \cite{G}. Theorem 3.11 of op. cit. then shows $F_{\ii}^{t}\circ s(\gamma)=r(\gamma)$. Conclusively $$\lambda_{a^*}-\left<s(\gamma), G_{\ii}(\lambda)(x)\right>=\left<r(\gamma),x\right>.$$

Fix $k\in N$ and let $a\in [n-1]$ be such that $i_k=a$. We have $$\left\langle r(\upsilon(k)), G_{\ii}(x)\right\rangle =\lambda_a-(F_{\ii}(x))_k=\lambda_a-\left\langle s(\upsilon(k)),x\right\rangle.$$ Hence we have proved the first statement of the Theorem.

The second statement is immediate from the constructions.
\end{proof}

\begin{rem} One may easily check that $\opp:\mathcal{L}_{\ii^*}(\lambda)^* \rightarrow \mathcal{L}_{\ii}(\lambda^*)$ induces an isomorphism of crystals. By \cite{GKS2} (see also \cite{GKS3}) we have $\Phi^{\jj}_{\ii} \circ G_{\jj}(\lambda) \circ \Psi^{\ii}_{\jj}=G_{\ii}(\lambda)$ and thus $G_{\ii}(\lambda):\mathcal{S}_{\ii}(\lambda)^* \rightarrow \mathcal{L}_{\ii}(\lambda^*)$ induces an anti-isomorphism of crystals (i.e. ${f}_a(G_{\ii}(\lambda)(x))=G_{\ii}(\lambda)({e}_a x)$ for $a \in [n-1]$). 

For the lexicographical minimal reduced word there furthermore are linear crystal isomorphism between each pair of polytopes appearing in Theorem \ref{2} leading to piecewise linear bijection for all reduced words. We elaborate on this in \cite{GKS3}.
\end{rem}

\def\cprime{$'$} \def\cprime{$'$} \def\cprime{$'$} \def\cprime{$'$}

\end{document}